\newtheorem{prop}{Proposition}
\newtheorem{lemma}{Lemma}
\newtheorem{corollary}{Corollary}
\newtheorem{theorem}{Theorem}
\theoremstyle{definition}
\newtheorem{remark}{Remark}
\newtheorem{definition}{Definition}
\begin{document}
\begin{frontmatter}

\title{Generalized fractional Brownian motion}
\author{\inits{M.}\fnm{Mounir}\snm{Zili}}\email{Mounir.Zili@fsm.rnu.tn}
\address{University of Monastir, Faculty of Sciences of Monastir,\\
Department of Mathematics, Avenue de l'Environnement,\\ 5000 Monastir, Tunisia}

\markboth{M. Zili}{Generalized fractional Brownian motion}

\begin{abstract}
We introduce a new Gaussian process, a generalization of both
fractional and subfractional Brownian motions, which could serve as a
good model for a larger class of natural phenomena. We study its main
stochastic properties and some increments characteristics. As an
application, we deduce the properties of nonsemimartingality, H\"older
continuity, nondifferentiablity, and existence of a local time.
\end{abstract}

\begin{keywords}
\kwd{Generalized fractional and subfractional Brownian motion}
\kwd{stationarity}
\kwd{Markovity}
\kwd{semimartingality}
\end{keywords}

\received{13 November 2016}
%
\accepted{14 December 2016}
\publishedonline{16 January 2017}
\end{frontmatter}

\section{Introduction}

It is well known that the two-sided fractional Brownian motion (tsfBm)
with Hurst parameter
$ H \in(0,1)$ is a
centered Gaussian process $B^H = \{ B_t^H, t \in{\mathbb
R} \} $,
defined on a probability space $(\varOmega, F, {\Bbb P}) $,
with the covariance function
\begin{equation}
\label{eq:1} Cov \bigl(B_t^H, B_s^H
\bigr) = \frac{1}{2} \bigl( | s |^{2H} + | t |
^{2H} - | t -s |^{2H} \bigr) .
\end{equation}

When $H = \frac{1}{2} $, $B^H$ is a two-sided Brownian
motion (tsBm).
The self-similarity and stationarity of the increments are two main
properties because of which fBm enjoyed success as a modeling tool in
many areas, such as finance, hydrology, biology, and telecommunications.

In \cite{TB}, the authors suggested another kind of extension of the
Bm, called the
subfractional Brownian motion (sfBm), which preserves most of the properties
of the fBm, but not the stationarity of the increments. It is a
centered Gaussian process $\xi^H = \{ \xi_t^H, t \in[0,
\infty) \} $
with the covariance function
\begin{equation}
\label{eq:1-1} S(t,s) = t^{2H} + s^{2H} - \frac{1}{2}
\bigl( (t+s)^{2H} + | t-s | ^{2H} \bigr),
\end{equation}
with $H \in(0,1)$. The case $H = \frac{1}{2}$ corresponds to the Bm.

The sfBm is intermediate between Brownian motion and fractional
Brownian motion
in the sense that it has properties analogous to those of fBm, but the
increments on nonoverlapping
intervals are more weakly correlated, and their covariance decays
polynomially at a higher rate. So the sfBm does not generalize the fBm.

One extension of the sfBm was introduced in \cite{SGh} and called the
generalized subfractional Brownian motion (GsfBm). It is a centered
Gaussian process starting from zero with covariance function
\[
G(t,s) = \bigl(t^{2H} + s^{2H}\bigr)^K -
\frac{1}{2} \bigl( (t+s)^{2HK} + | t-s |^{2HK} \bigr),
\]
with $H \in(0,1)$ and $K \in[1,2)$. The case $K =1$ corresponds to
the sfBm.

Another type of a generalized form of the sfBm was introduced in \cite
{MZROSE} and \cite{Zi-ElN} as a linear combination of a finite number
of independent subfractional Brownian motions. It was called the mixed
subfractional Brownian motion (msfBm). The msfBm is a centered mixed
self-similar\footnote{The mixed self-similarity property was introduced
by Zili \cite{MZ}.} Gaussian process and does not have stationary
increments. Both GsfBm and msfBm do not generalize the fBm.

On the other hand, in the literature, we find many different kinds of
extensions of the fBm, such as the multifractional Brownian motion \cite
{Pel}, the mixed fractional Brownian motion \cite{MZ}, and the
bifractional Brownian motion \cite{Houd}. But none of them generalizes
the sfBm.

In this paper, we introduce a new stochastic process, which is an
extension of both subfractional Brownian motion and fractional Brownian
motion. This process is completely different from all the other
extensions existing in the literature; we call it the generalized
fractional Brownian motion. More precisely, let us take two real
constants $a$ and $b$ such that
$(a, b) \neq(0,0)$ and $H \in(0,1)$.
\begin{definition}
A generalized fractional Brownian motion (gfBm) with parameters $a,b$,
and $H$,
is a process
$Z^H = \{ Z_t^H(a,b) ; t \ge0 \} =
\{ Z_t^H; t \ge0 \} $ defined on the probability
space $(\varOmega, F, {\Bbb P}) $ by
\begin{equation}
\label{eq:3} \forall t \in{\Bbb R}_+, \hspace{5mm} Z_t^H
= Z_t^H(a,b) = a B_t^H + b
B_{-t}^H ,
\end{equation}
where
$(B_t^H)_{t \in{\Bbb R}}$ is a two-sided fractional
Brownian motion with parameter $H$.
\end{definition}

If $a = 1, b= 0$, then $Z^H = \{ B^H_t; t \ge0 \}$, that
is, $Z^H$ is a fractional Brownian motion. If $a = b =
\frac{1}{\sqrt{2}}$, then it is easy to see, either by a direct
calculation using (\ref{eq:1}) or by Lemma~\ref{L1}, that the
covariance of process $Z^H$ is precisely $S(t,s)$ given by (\ref
{eq:1-1}). So, in this case, $Z^H$ is a subfractional Brownian motion. If $a = b = \frac
{1}{\sqrt{2}}$ and $H = \frac{1}{2}$ or if $ a= 1, b= 0
$, and
$H = \frac{1}{2}$, $G^H$ is clearly a~standard Brownian motion.

So the gfBm is, at the same time, a generalization of the fractional
Brownian motion, of the subfractional Brownian motion, and of course of
the standard Brownian motion. This is an important motivation for the
introduction of such a process since it allows to deal with a larger
class of modeled natural phenomena, including those with stationary or
nonstationary increments.

This paper contains two sections. In the first one, we investigate the
main stochastic properties of the gfBm. We show in particular that the
gfBm is a~Gaussian, self-similar, and non-Markov process (except in the
case where $H = 1/2$). The second section is devoted to the
investigation of some characteristics of increments of gfBms. As an
application, we deduce the properties of nonsemimartingality, H\"older
continuity, nondifferentiablity, and existence of a local time.

\section{The main properties}
By the Gaussianity of $B^H$and by Eq.~(\ref{eq:1}) we easily get the
following lemma.
\begin{lemma}
\label{L1}
The gfBm $(Z_t^H(a,b))_{t \in{\Bbb R}_+}$ satisfies the following
properties:
\begin{itemize}
\item $Z^H $ is a centered Gaussian process.
\item $\forall s \in{\Bbb R}_+,\ \forall t \in{\Bbb R}_+$,
\begin{align*}
&\operatorname{Cov} \bigl(Z_t^H(a,b), Z_s^H(a,b) \bigr)\\
&\quad = \frac{1}{2}(a+b)^2 \bigl( s ^{2H} + t^{2H} \bigr) - ab ( t+ s ) ^{2H} - \frac{a^2 + b^2}{2} | t- s|^{2H} .
\end{align*}
\item $\forall t \in{\Bbb R}_+$,
\begin{align*}
E \bigl( Z_t^H(a,b)^2 \bigr) = \bigl( a^2 + b^2 - \bigl(2^{2H}-2\bigr)ab \bigr) t ^{2H}.
\end{align*}
\end{itemize}
\end{lemma}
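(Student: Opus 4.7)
The plan is to obtain all three bullets as direct consequences of the defining identity $Z_t^H(a,b)=aB_t^H+bB_{-t}^H$ together with the covariance \eqref{eq:1} of the two-sided fBm; no new probabilistic ingredient is needed, so the task is essentially to organize the linear-algebra bookkeeping.

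First, I would dispatch the Gaussianity. Since $(B_u^H)_{u\in\mathbb{R}}$ is a Gaussian process, for any finite collection of times $t_1,\dots,t_n\geq 0$ the vector $(B_{t_1}^H,B_{-t_1}^H,\dots,B_{t_n}^H,B_{-t_n}^H)$ is Gaussian, so the vector $(Z_{t_1}^H,\dots,Z_{t_n}^H)$ is its image under a linear map and is therefore Gaussian. Centering is immediate from $E[B_u^H]=0$ for every $u\in\mathbb R$.

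Second, I would compute the covariance by bilinear expansion:
\[
\operatorname{Cov}\bigl(Z_t^H,Z_s^H\bigr)=a^2E\bigl[B_t^HB_s^H\bigr]+ab\,E\bigl[B_t^HB_{-s}^H\bigr]+ab\,E\bigl[B_{-t}^HB_s^H\bigr]+b^2E\bigl[B_{-t}^HB_{-s}^H\bigr].
\]
Each of the four expectations is then read off from \eqref{eq:1} by taking the appropriate signs: for $t,s\ge0$ one gets $E[B_t^HB_s^H]=E[B_{-t}^HB_{-s}^H]=\tfrac12(t^{2H}+s^{2H}-|t-s|^{2H})$ and $E[B_t^HB_{-s}^H]=E[B_{-t}^HB_s^H]=\tfrac12(t^{2H}+s^{2H}-(t+s)^{2H})$. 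Grouping the coefficients of $t^{2H}+s^{2H}$, of $(t+s)^{2H}$, and of $|t-s|^{2H}$ and using $a^2+2ab+b^2=(a+b)^2$ yields the stated formula.

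Finally, the variance bullet follows by specializing $s=t$ in the formula just derived: the $|t-s|^{2H}$ term vanishes, $(t+s)^{2H}=2^{2H}t^{2H}$, and combining $(a+b)^2-2^{2H}ab=a^2+b^2-(2^{2H}-2)ab$ produces the claimed expression. The only potential pitfall is a careless sign or a miscollected coefficient when assembling the four cross terms, so I would double-check the identification $(a+b)^2-2ab=a^2+b^2$ at the last step; otherwise the proof is purely computational and contains no essential obstacle.
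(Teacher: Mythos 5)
Your proposal is correct and follows exactly the route the paper intends: the paper gives no detailed proof, stating only that the lemma follows easily from the Gaussianity of $B^H$ and the covariance formula \eqref{eq:1}, which is precisely the bilinear expansion and coefficient bookkeeping you carry out. Your computations of the four cross-covariances, the grouping into $(a+b)^2$, and the specialization $s=t$ all check out against the stated formulas.
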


The following lemma deals with the self-similarity property.

\begin{lemma}
The gfBm is a self-similar process.
\end{lemma}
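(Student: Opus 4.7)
The plan is to identify the self-similarity index and then verify it at the level of finite-dimensional distributions. A natural guess, dictated by the $t^{2H}$ appearing in the variance formula of Lemma~\ref{L1}, is that $Z^H$ should be self-similar with index $H$, i.e.\ that for every $c>0$ one has
\[
\bigl(Z_{ct}^H(a,b)\bigr)_{t \ge 0} \stackrel{d}{=} \bigl(c^H Z_t^H(a,b)\bigr)_{t \ge 0}
\]
as processes indexed by $t\in\mathbb{R}_+$.

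First I would observe that both processes in the above identity are centered Gaussian: this is immediate from the first bullet of Lemma~\ref{L1}, since $(Z_{ct}^H)_{t\ge 0}$ is a deterministic reindexing of a centered Gaussian process and $(c^H Z_t^H)_{t\ge 0}$ is a deterministic rescaling of one. Therefore, to prove equality in distribution it suffices to check that their covariance functions agree. Using the explicit formula from Lemma~\ref{L1} and factoring out $c^{2H}$ from each of the three terms $(cs)^{2H}+(ct)^{2H}$, $(ct+cs)^{2H}$, and $|ct-cs|^{2H}$, one obtains
\[
\operatorname{Cov}\bigl(Z_{ct}^H(a,b), Z_{cs}^H(a,b)\bigr) = c^{2H}\operatorname{Cov}\bigl(Z_t^H(a,b), Z_s^H(a,b)\bigr),
\]
which is precisely $\operatorname{Cov}(c^H Z_t^H(a,b), c^H Z_s^H(a,b))$.

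There is no real obstacle here: the proof is essentially the homogeneity of $x\mapsto x^{2H}$ plugged into the covariance formula already established in Lemma~\ref{L1}, combined with the standard fact that centered Gaussian processes are determined by their covariance. The only thing to be mindful of is to state clearly the self-similarity index (namely $H$) in the conclusion, so that the lemma can be invoked quantitatively in the subsequent section on H\"older regularity, nondifferentiability and local times.
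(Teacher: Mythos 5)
Your proof is correct and follows exactly the same route as the paper: both processes $\{Z_{ct}^H\}$ and $\{c^H Z_t^H\}$ are centered Gaussian, so equality in law reduces to matching covariances, which follows from the homogeneity of $x\mapsto x^{2H}$ in the covariance formula of Lemma~\ref{L1}. Your version merely makes the scaling index $H$ and the factorization of $c^{2H}$ explicit, which the paper leaves implicit.
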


\begin{proof}
This follows from the fact that, for fixed $h >0$, the processes
$\{ Z_{ht}^H(a,b); t \ge0 \} $ and
$\{ h^{H} Z_t^H (a,b) ; t \ge0 \} $ are
Gaussian, centered, and have the same
covariance function.
\end{proof}

Let us now study the non-Markov property of the gfBm.

\begin{prop}
\label{p:1}
{\it For all $ H \in(0; 1 ) \setminus\{ \frac{1}{2} \} $
and $(a,b) \in{\mathbb R}^2 \setminus\{ (0,0) \}$,
$(Z_t^H(a,b))_{t \in{\Bbb R}_+}$ is not a Markov
process. }
\end{prop}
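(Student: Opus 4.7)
The plan is to use the Feller--Doob characterization of Gaussian Markov processes: a centered Gaussian process $X$ with $E[X_t^2]>0$ for every $t>0$ is Markov if and only if its covariance function $R$ satisfies
\[
R(s,u)\,R(t,t)=R(s,t)\,R(t,u)\qquad\text{for all }0<s\le t\le u.
\]
Consequently, to prove the proposition it will suffice, for every admissible $(a,b,H)$, to exhibit a single triple $(s,t,u)$ at which this identity fails.

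First I would verify the positivity hypothesis. By Lemma~\ref{L1},
\[
E\bigl[\bigl(Z_t^H\bigr)^2\bigr]=\bigl(a^2+b^2-\bigl(2^{2H}-2\bigr)ab\bigr)\,t^{2H},
\]
and since $|2^{2H}-2|<2$ on $H\in(0,1)$, the elementary inequality $a^2+b^2\ge 2|ab|$ makes this strictly positive whenever $(a,b)\ne(0,0)$. Next, using the self-similarity established in the preceding lemma, the Markov defect
\[
D(s,t,u):=R(s,u)\,R(t,t)-R(s,t)\,R(t,u)
\]
is homogeneous of degree $4H$ in $(s,t,u)$, so I only need to test it on a single normalized family, say $s=1$, $t=2$, $u\ge 2$.

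The main obstacle is algebraic: after substituting Lemma~\ref{L1}, $D(1,2,u)$ is a finite combination of products of $2H$-powers of $u\pm 1$, $u\pm 2$, $u$, $1$, $2$, which does not visibly simplify. The cleanest route is to send $u\to\infty$ and expand each factor via
\[
(u\pm c)^{2H}=u^{2H}\pm 2Hc\,u^{2H-1}+H(2H-1)\,c^{2}\,u^{2H-2}+o\bigl(u^{2H-2}\bigr).
\]
One then checks that the leading $u^{2H}$ coefficient in $D(1,2,u)$ vanishes identically in $(a,b,H)$, while the first nonvanishing coefficient factors as an explicit function of $H$ (which vanishes on $(0,1)$ only at $H=1/2$) times a polynomial in $(a,b)$ that does not vanish on $(a,b)\ne(0,0)$. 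A harmless change of the test triple covers the degenerate sub-cases $ab=0$ and $a=b$ (where $Z^{H}$ is, up to scaling, a fBm or an sfBm, respectively). This shows $D(1,2,u)\ne 0$ for all sufficiently large $u$ whenever $H\ne 1/2$, violating Feller's identity and proving that $Z^{H}$ is not Markov.
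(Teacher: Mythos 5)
Your route is the same one the paper takes: the covariance--factorization criterion for centered Gaussian processes with strictly positive variance (which the paper quotes from Revuz--Yor), a verification of that positivity, and then an asymptotic expansion of the defect $D(s,t,u)=R(s,u)R(t,t)-R(s,t)R(t,u)$ along a special family of triples -- the paper uses $s=\sqrt{t}<t<u=t^{2}$ with $t\to\infty$, you use $(1,2,u)$ with $u\to\infty$. The positivity check and the reduction by self-similarity are fine. The gap is that everything after ``one then checks'' -- which is the entire substance of the proposition -- is asserted rather than carried out, and the asserted structure of the expansion is not correct, so the plan as written does not yet constitute a proof.

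Indeed, performing your expansion with the covariance of Lemma~\ref{L1} gives
\begin{align*}
D(1,2,u)&=H(a-b)^{2}\,ab\,\bigl(2\cdot3^{2H}-2^{4H}-2\bigr)\,u^{2H-1}\\
&\quad+\frac{(a+b)^{2}}{2}\,2^{2H}\bigl[(a^{2}+b^{2})\bigl(1-2^{2H-1}\bigr)+ab\bigl(3^{2H}-2^{2H+1}+1\bigr)\bigr]+O\bigl(u^{2H-2}\bigr).
\end{align*}
Thus there is no single ``first nonvanishing coefficient'' of the form (function of $H$) times (a polynomial in $(a,b)$ nonvanishing off the origin): the $u^{2H-1}$ coefficient vanishes on the lines $a=b$, $a=0$, $b=0$; the constant coefficient carries the factor $(a+b)^{2}$ and so vanishes on the line $a+b=0$, which is not among the ``degenerate sub-cases'' you list; which of the two terms dominates depends on the sign of $2H-1$ (for $H<\frac12$ the power $u^{2H-1}$ tends to $0$ and the constant term leads); and the purely $H$-dependent factors, such as $2\cdot3^{2H}-2^{4H}-2$, vanish at $H=1$ as well as at $H=\frac12$, so even the claim that they are nonzero on $(0,1)\setminus\{\frac12\}$ requires an argument (a zero count for exponential sums, or a convexity estimate), not an assertion. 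In short, the case discussion you hoped to compress into one sentence -- separating $H>\frac12$ from $H<\frac12$ and treating the exceptional lines $a=b$, $a+b=0$, $ab=0$, proving in each case that an explicit function of $H$ is nonzero -- is precisely the four-case analysis that constitutes the paper's proof. The skeleton is sound and could be completed along these lines, but as it stands the decisive verification is missing.
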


\begin{proof}
We will only prove the proposition in the case where $b \neq
0$; the result with $b = 0$ is known (see \cite{MZ} and the references therein).
The process $Z^H$ is a~centered Gaussian, and for all $t > 0$,
\[
\operatorname{Cov} \bigl( Z^H_t, Z^H_t \bigr)
= \bigl(a^2 + b^2 - \bigl(2^{2H}-2\bigr)ab\bigr)
t^{2H} > 0.
\]
Then, if $Z^H$ were a Markov process, according
to \cite{Re}, for all $s < t < u$, we would have
\begin{equation}
\operatorname{Cov} \bigl( Z^H_s,Z^H_u \bigr)
\operatorname{Cov} \bigl( Z^H_t, Z^H_t \bigr)
= \operatorname{Cov} \bigl( Z^H_s,Z^H_t \bigr)
\operatorname{Cov} \bigl( Z^H_t, Z^H_u \bigr)
.
\end{equation}

In the particular case where $1 < s = \sqrt{t} < t < u =
t^2$, we have
\begin{align*}
& \biggl[ \frac{1}{2} (a+b)^2\bigl(1+t^{-3H}\bigr)
-ab\bigl(1+t^{-3/2}\bigr)^{2H}-\frac{a^2+b^2}{2}
\bigl(1-t^{-3/2}\bigr)^{2H} \biggr]
\\
& \qquad \times \bigl[ a^2 + b^2) -
\bigl(2^{2H}-2\bigr) ab \bigr]
\\
& \quad = \biggl[ \frac{1}{2} (a+b)^2\bigl(1+t^{-H}
\bigr) -ab\bigl(1+t^{-1/2}\bigr)^{2H}-\frac{a^2+b^2}{2}
\bigl(1-t^{-1/2}\bigr)^{2H} \biggr]
\\
& \qquad \times \biggl[ \frac{1}{2} (a+b)^2
\bigl(1+t^{-2H}\bigr) -ab\bigl(1+t^{-1}\bigr)^{2H}-
\frac{a^2+b^2}{2}\bigl(1-t^{-1}\bigr)^{2H} \biggr].
\end{align*}

So
\begin{align*}
& \biggl[ \frac{1}{2} (a+b)^2\bigl(1+t^{-3H}\bigr)
-ab \bigl( 1+ 2H t^{-3/2} + H(2H-1)t^{-3} + o
\bigl(t^{-3}\bigr) \bigr)
\\
& \qquad -\frac{a^2+b^2}{2} \bigl( 1 - 2Ht^{-3/2} + H(2H-1)
t^{-3} + o\bigl(t^{-3}\bigr) \bigr) \biggr]
\\
& \qquad \times \bigl[ a^2 + b^2 - \bigl(2^{2H}-2
\bigr) ab \bigr]
\\
& \quad = \biggl[ \frac{1}{2} (a+b)^2\bigl(1+t^{-H}
\bigr) -ab \bigl( 1+ 2Ht^{-1/2} + H(2H-1) t^{-1} + o
\bigl(t^{-1}\bigr) \bigr)
\\
& \qquad -\frac{a^2+b^2}{2} \bigl( 1- 2Ht^{-1/2} + H(2H-1)
t^{-1} + o\bigl(t^{-1}\bigr) \bigr) \biggr]
\\
& \qquad \times \biggl[ \frac{1}{2} (a+b)^2 \bigl( 1+
t^{-2H}\bigr) -ab \bigl( 1 + 2H t^{-1} + H(2H-1)
t^{-2} + o\bigl(t^{-2}\bigr) \bigr)
\\
& \qquad -\frac{a^2+b^2}{2} \bigl( 1- 2Ht^{-1} + H(2H-1)t^{-2}
+ o\bigl(t^{-2}\bigr) \bigr) \biggr].
\end{align*}

{\it First case}: $0 < H < \frac{1}{2}$, $a+b \neq0$.
By Taylor's expansion we get, as $t \rightarrow\infty$,
\[
\frac{1}{2}(a+b)^2 \bigl[ a^2 + b^2 -
\bigl(2^{2H}-2\bigr) ab \bigr] t^{-3H} \approx\frac{1}{4}
(a+b)^4 t^{-3H},
\]
which is true if and only if
\[
\frac{(a-b)^2}{2} -\bigl(2^{2H}-2\bigr) ab = 0.
\]
However, it is easy to check that $\frac{(a-b)^2}{2} -(2^{2H}-2) ab>0 $
for fixed $b$ and every real~$a$.

\smallskip

{\it Second case}: $0 < H < \frac{1}{2}$ and $a+b = 0$.
By Taylor's expansion we get, as $t \rightarrow\infty$,
\[
\bigl[ a^2 + b^2 - \bigl(2^{2H}-2\bigr) ab
\bigr] t^{-3/2} \approx\bigl(-2Hab + \bigl(a^2+b^2
\bigr)H\bigr) t^{-3/2},
\]
which is true if and only if
$a=b = 0$, which is false.\vadjust{\eject}


{\it Third case}: $ \frac{1}{2} < H < 1$, $a-b \neq0$.
By \xch{Taylor's}{Taylor;s} expansion we get, as $t \rightarrow\infty$,
\[
H(a-b)^2 \bigl[ a^2 + b^2 -
\bigl(2^{2H}-2\bigr) ab \bigr] t^{-3/2} \approx
H^2(a-b)^4 t^{-3/2},
\]
which is true if and only if
\[
a^2(1-H) + b^2(1-H) + ab\bigl(2-2^{2H} + 2H
\bigr) = 0.
\]
However, it is easy to check that $ a^2(1-H) + b^2(1-H) + ab(2-2^{2H} +
2H)>0 $ for fixed $b$ and every real $a$.

{\it Fourth case}: $ \frac{1}{2} < H < 1$ and $a-b = 0$.
By Taylor's expansion we get, as $t \rightarrow\infty$,
\[
\frac{1}{2} (a+b)^2 \bigl[ a^2 + b^2 -
\bigl(2^{2H}-2\bigr) ab \bigr] t^{-3H} \approx
\frac{1}{4}(a+b)^4 t^{-3H},
\]
which is true if and only if
$2- 2^{2H} =0$, which is false since $H \neq\frac{1}{2}$. The proof of
Lemma~\ref{p:1} is complete.
\end{proof}

\section{Study of increments and some applications}

Let us start by the following lemma, in which we characterize the
second moment increments of the gfBm.

\begin{lemma}
\label{L:2}
For all $(s,t) \in{\Bbb R}_+^2$ such that $ s \le
t$\textup{:}
\begin{enumerate}
%
\item\ \vspace*{-23pt}
\begin{align*}
E \bigl( Z_t^H(a,b) - Z_s^H(a,b)\bigr) ^2 &= \bigl(a^2 + b^2\bigr) | t-s|^{2H}\\
&\quad - 2^{2H} ab \bigl( | t |^{2H} + | s |^{2H} \bigr) + 2ab | t + s |^{2H};
\end{align*}
\item\ \vspace*{-24pt}
\begin{align*}
\gamma(a,b,H) (t-s)^{2H} \le E \bigl( Z_t^H(a,b)- Z_s^H(a,b) \bigr) ^2 \le\nu(a,b,H)(t-s)^{2H},
\end{align*}
\end{enumerate}
where
\[
\gamma(a,b,H) = \bigl( a^2 + b^2 - 2 ab
\bigl(2^{2H-1}-1\bigr) \bigr) {\bf 1}_{{\cal C}}(a,b,H) + \bigl(
a^2 + b^2 \bigr) {\bf1}_{{\cal D}}(a,b,H),
\]
and
\[
\nu(a,b,H) = \bigl( a^2 + b^2 \bigr) {
\bf1}_{{\cal C}}(a,b,H) + \bigl( a^2 + b^2 - 2 ab
\bigl(2^{2H-1}-1\bigr) \bigr) {\bf1}_{{\cal D}}(a,b,H) ,
\]
\[
{\cal C} = \biggl\{ (a,b, H) \in{\mathbb R}^2\setminus\bigl\{ (0,0)
\bigr\} \times]0,1[; \biggl(H > \frac{1}{2}, ab \ge0 \biggr) \; or \;
\biggl( H < \frac{1}{2}, ab \le0\biggr) \biggr\} ,
\]
\[
{\cal D} = \biggl\{ (a,b, H) \in{\mathbb R}^2\setminus\bigl\{ (0,0)
\bigr\} \times]0,1[; \biggl(H > \frac{1}{2}, ab \le0\biggr) \; or \;
\biggl(H < \frac{1}{2}, ab \ge0 \biggr) \biggr\} .
\]

Moreover, the constants in the inequalities of statement 2 are the best possible.
\end{lemma}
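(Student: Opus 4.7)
For statement~1 the plan is to expand
\[
E \bigl( Z_t^H(a,b) - Z_s^H(a,b) \bigr)^2 = E\bigl(Z_t^H\bigr)^2 + E\bigl(Z_s^H\bigr)^2 - 2\operatorname{Cov}\bigl(Z_t^H,Z_s^H\bigr),
\]
substitute the three formulas of Lemma~\ref{L1}, and collect. The $\tfrac{1}{2}(a+b)^2(t^{2H}+s^{2H})$ contribution from the covariance combines with the $(a^2+b^2-(2^{2H}-2)ab)(t^{2H}+s^{2H})$ contribution from the two diagonal terms to produce exactly the announced $-2^{2H}ab(t^{2H}+s^{2H})$ coefficient, while the $(t+s)^{2H}$ and $|t-s|^{2H}$ pieces of the covariance deliver the remaining two terms of statement~1.

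For statement~2 I would rewrite the formula of part~1 as
\[
E\bigl(Z_t^H - Z_s^H\bigr)^2 = \bigl(a^2+b^2\bigr)(t-s)^{2H} + 2ab\,h(s,t), \qquad h(s,t) := (t+s)^{2H} - 2^{2H-1}\bigl(t^{2H}+s^{2H}\bigr),
\]
and reduce the bounds to the behaviour of the ratio $h(s,t)/(t-s)^{2H}$, which by construction is scale-invariant and depends only on $r := (t+s)/(t-s) \in [1,\infty)$. A direct rescaling yields $h(s,t)/(t-s)^{2H} = -\tfrac{1}{2}G(r)$, where
\[
G(r) := (r+1)^{2H} + (r-1)^{2H} - 2r^{2H},
\]
so the whole task is to locate the range of $G$ on $[1,\infty)$.

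The main analytic step is to determine the sign and monotonicity of $G$. The sign comes from the convexity (resp.\ concavity) of $z\mapsto z^{2H}$ for $H>1/2$ (resp.\ $H<1/2$), giving $G\ge 0$ in the first case and $G\le 0$ in the second. For the monotonicity I would differentiate and use that the convexity of $z\mapsto z^{2H-1}$ is \emph{opposite} to that of $z\mapsto z^{2H}$ --- concave on $(0,\infty)$ for $H>1/2$ since $2H-1\in(0,1)$, convex for $H<1/2$ since $2H-1<0$ --- so that
\[
G'(r)=2H\bigl[(r+1)^{2H-1}+(r-1)^{2H-1}-2r^{2H-1}\bigr]
\]
has opposite sign to $G$. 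Together with $G(1)=2^{2H}-2$ and the Taylor estimate $G(r)=O(r^{2H-2})\to 0$ as $r\to\infty$, this makes $G$ a monotone bijection between $[1,\infty)$ and the interval with endpoints $0$ and $2^{2H}-2$. Multiplying by $-ab$ and distributing over the four sign combinations of $ab$ and $H-1/2$ then produces the bounds $\gamma(a,b,H)$ and $\nu(a,b,H)$ with the $\mathcal{C}/\mathcal{D}$ dichotomy announced in the statement.

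Sharpness is immediate from the extremal values of $r$: the case $r=1$ (i.e., $s=0$) attains the constant $a^2+b^2-2ab(2^{2H-1}-1)$, and the limit $r\to\infty$ (i.e., $s\to t^-$) shows $a^2+b^2$ is also optimal. The one real subtlety, which I expect to be the technical heart of the argument, is the sign flip between the convexities of $z^{2H}$ and $z^{2H-1}$: it is what allows the same extremal constant $2^{2H}-2$ to control both the $H>1/2$ and $H<1/2$ regimes, but it forces the careful four-case bookkeeping that swaps $\gamma$ and $\nu$ between $\mathcal{C}$ and $\mathcal{D}$.
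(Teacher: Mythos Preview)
Your proposal is correct and follows essentially the same strategy as the paper: both exploit the homogeneity of the increment variance to reduce statement~2 to a one-variable problem and then read off the sharp constants from the range of that one-variable function. The paper organizes the reduction slightly differently: it fixes $s$, sets $x=t-s$, and looks for the optimal $\lambda$ for which
\[
f_\lambda(x)=2^{2H-1}\bigl((s+x)^{2H}+s^{2H}\bigr)-(2s+x)^{2H}-\lambda x^{2H}
\]
is monotone; after differentiating and substituting $X=s/x$ it lands on $h(X)=2^{2H-1}(X+1)^{2H-1}-(2X+1)^{2H-1}$, whose strict monotonicity is asserted rather than argued. Your variable $r=(t+s)/(t-s)$ is just $2X+1$, and your second-difference function $G(r)$ is the antiderivative-level version of the paper's $h$; your convexity-flip observation for $z\mapsto z^{2H-1}$ supplies precisely the monotonicity justification the paper leaves implicit. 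The two write-ups are interchangeable, with yours marginally more symmetric across the $H\gtrless\tfrac12$ cases.
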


\begin{proof}
The first statement is a direct consequence of Eqs.~(\ref{eq:1}) and
(\ref{eq:3}). So we will just check the second one. We will do that in
the case where $H > \frac{1}{2}$ and $ab
\ge0$; the proof in the remaining cases is similar.

Since the function $x \longmapsto x^{2H}$ is convex on
${\Bbb R}_+$, we have
\[
2^{2H-1} \bigl(t^{2H} + s^{2H} \bigr) -
(t+s)^{2H} \ge0,
\]
which yields that
\[
E \bigl( Z_t^H(a,b) - Z_s^H(a,b)
\bigr) ^2 \le \bigl(a^2 + b^2\bigr) | t-s
| ^{2H} .
\]

To get the lower bound, we consider the function
\[
f_{\lambda}: x \longmapsto2^{2H-1} \bigl((s+x)^{2H} +
s^{2H} \bigr) - (2s+x)^{2H} - \lambda x^{2H},
\]
with $\lambda> 0$. We have $ f_\lambda(0)
= 0$. So to get the stated lower bound, it suffices to check that, for
$\lambda= 2^{2H-1}-1$, $f_\lambda$ is a decreasing function. But here
we show more;
we look for the lower bound $\nu$ (resp.\ the upper bound $\gamma$)
of the set of reals $\lambda> 0$ such that $f_\lambda$ decreases
(resp.\ $f_\lambda$ increases) for $x > 0$, which will give us the
maximum $\gamma\ge0 $ and the minimum $\nu> 0$ such that
\[
\gamma(t-s) ^{2H} \le2^{2H-1} \bigl(t^{2H} +
s^{2H} \bigr) - (t+s)^{2H} \le\nu (t-s)^{2H}.
\]

The function $f_{\lambda}$ is differentiable in ${\Bbb R}_+^\star$,
and for all $x > 0$,
\[
f_{\lambda}'(x) = 2H \bigl[ 2^{2H-1}
(s+x)^{2H-1} -(2s +x)^{2H-1} - \lambda x^{2H-1} \bigr].
\]
So
\begin{align*}
f_\lambda'(x) < 0 \quad& \Longleftrightarrow \quad 2^{2H-1}
(s+x)^{2H-1} -(2s +x)^{2H-1} < \lambda x^{2H-1}
\\
& \Longleftrightarrow \quad\frac{2^{2H-1} (s+x)^{2H-1} -(2s +x)^{2H-1}}{x^{2H-1}} < \lambda.
\end{align*}

Denote $g(x) = \frac{2^{2H-1} (s+x)^{2H-1} -(2s +x)^{2H-1}}{x^{2H-1}}$, that is,
\[
g(x) = 2^{2H-1} \biggl( \frac{s}{x} + 1 \biggr) ^{2H-1} -
\biggl( 2 \frac{s}{x} + 1 \biggr)^{2H-1} .
\]

Putting $X = \frac{s}{x}$, we can write $ g(x) = h(X)$ where
\[
h(X) = 2^{2H-1} ( X + 1 ) ^{2H-1} - ( 2 X + 1 )^{2H-1}.
\]
The function $h$ is continuous and strictly decreasing. Consequently,
\[
h\bigl(]0, + \infty[ \bigr) = \,\Big] \lim_{X \rightarrow+ \infty} h ; \lim
_{X \rightarrow0} h \Big[\,= \,\big] 0; 2^{2H-1} -1 \big[ .
\]
So, $\sup_{x > 0} g = 2^{2H-1} -1$ and $\inf_{x > 0} g = 0$. We deduce that the lower bound $\nu> 0$ and the
upper bound $\gamma> 0$ such that
\[
\gamma(t-s) ^{2H} \le2^{2H-1} \bigl(t^{2H} +
s^{2H} \bigr) - (t+s)^{2H} \le\nu (t-s)^{2H}
\]
are $\nu= 2^{2H-1} -1$ and $\gamma= 0$.
\end{proof}

The first main application of Lemma~\ref{L:2} is the following result.
\begin{lemma}
For every $H \in \,] 0, 1  [\, \setminus \{ \frac
{1}{2}  \}$, the gfBm is not a semimartingale.
\end{lemma}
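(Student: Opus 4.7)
The plan is to prove non-semimartingality via a $p$-variation argument, leveraging the two-sided bound on second-moment increments supplied by Lemma \ref{L:2}. Because $Z^H$ has centered Gaussian increments whose $L^2$-norm is comparable to $(t-s)^H$, the process shares the $p$-variation behaviour of fractional Brownian motion of index $H$, and the classical fBm non-semimartingale argument transfers without essential change.

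Fix $T>0$ and the uniform partition $t_k^n = kT/n$, $k = 0,\dots,n$. Write $\Delta_k^n Z^H := Z^H_{t_k^n} - Z^H_{t_{k-1}^n}$ and $V_n^{(p)} := \sum_{k=1}^n |\Delta_k^n Z^H|^p$. Since every increment is centered Gaussian, $E|\Delta_k^n Z^H|^p = c_p\bigl[E(\Delta_k^n Z^H)^2\bigr]^{p/2}$ with $c_p = E|\mathcal N(0,1)|^p$, and Lemma \ref{L:2} gives $\gamma(a,b,H)(T/n)^{2H} \le E(\Delta_k^n Z^H)^2 \le \nu(a,b,H)(T/n)^{2H}$ with $0<\gamma\le\nu$. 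Summing,
\[
E\bigl[V_n^{(p)}\bigr] \asymp n\cdot (T/n)^{pH} = T^{pH}\, n^{1-pH}.
\]

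Two cases then arise according to the sign of $1-2H$. When $H<1/2$, take $p=2$: $E[V_n^{(2)}]\gtrsim n^{1-2H}\to\infty$. A standard variance estimate on $V_n^{(2)}$ (the Gaussian vector $(\Delta_k^n Z^H)_k$ has covariances inherited from $B^H$, and sums of squared correlated Gaussians concentrate around their mean) yields $V_n^{(2)}\to\infty$ in probability, contradicting the finite quadratic variation of a continuous semimartingale along refining partitions. When $H>1/2$, take $p=2$: $E[V_n^{(2)}]\lesssim n^{1-2H}\to 0$, so $V_n^{(2)}\to 0$ in $L^1$ and hence in probability. If $Z^H$ admitted a canonical decomposition $Z^H = Z^H_0 + M + A$ with $M$ a continuous local martingale and $A$ of finite variation, the vanishing quadratic variation would force $M\equiv 0$, so $Z^H - Z^H_0 = A$ would have finite variation on $[0,T]$. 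On the other hand, taking $p=1$ yields $E[V_n^{(1)}]\asymp n^{1-H}\to\infty$, and an analogous concentration bound gives $V_n^{(1)}\to\infty$ in probability, which is incompatible with $A$ having finite variation.

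The main obstacle is upgrading the convergence of $E[V_n^{(p)}]$ to convergence in probability of $V_n^{(p)}$ itself. This is handled via second-moment estimates on $V_n^{(p)}$, which follow from the explicit Gaussian covariance structure of the increments; equivalently, the representation $Z^H_t = aB^H_t + bB^H_{-t}$ reduces the bookkeeping to covariance computations for two-sided fBm, for which the required correlation decay is classical.
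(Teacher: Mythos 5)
Your overall route is sound but it is genuinely different from the paper's, and it currently has a hole at its decisive step. The paper's proof is a one-liner: it invokes Lemma 2.1 of Bojdecki--Gorostiza--Talarczyk \cite{TB1}, which says precisely that a continuous centered Gaussian process whose second-moment increments are squeezed between two constants times $(t-s)^{2H}$ with $H\neq\frac12$ cannot be a semimartingale, and feeds it the two-sided bound of Lemma~\ref{L:2}. You use the same input (Lemma~\ref{L:2} plus Gaussianity of increments) but set out to reprove that criterion from scratch via $p$-variations: $V_n^{(2)}\to\infty$ for $H<\frac12$, and for $H>\frac12$ zero quadratic variation forces $Z^H-Z_0^H$ to be of finite variation, which is then contradicted by $V_n^{(1)}\to\infty$. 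The case split and the semimartingale logic (continuity of $M$ and $A$, $[Z]=[M]$, passing to an a.s.\ subsequence against the total variation bound) are correct, and your use of the strict positivity of $\gamma(a,b,H)$ is legitimate, though it deserves a line of verification.

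The gap is the step you yourself flag and then wave away: upgrading $\mathbb{E}[V_n^{(p)}]$ to convergence in probability of $V_n^{(p)}$. Calling the required correlation decay ``classical'' is only accurate for fractional Brownian motion, where one can lean on stationarity of increments (ergodic theorem) or on the well-known covariance decay $|j-k|^{2H-2}$. The gfBm with $ab\neq 0$ has nonstationary increments, so neither shortcut applies verbatim: you must actually estimate $\operatorname{Var}(V_n^{(2)})$ and $\operatorname{Var}(V_n^{(1)})$ for the mixed increments $\Delta_k = a(B^H_{t_k}-B^H_{t_{k-1}})+b(B^H_{-t_k}-B^H_{-t_{k-1}})$, which produces cross-covariances $\operatorname{Cov}(B^H_{t_j}-B^H_{t_{j-1}},\,B^H_{-t_k}-B^H_{-t_{k-1}})$; these are second differences of $u\mapsto u^{2H}$ near $t_j+t_k$ and decay like $(T/n)^{2H}(j+k)^{2H-2}$, which must be summed and compared with $(\mathbb{E}V_n^{(p)})^2$. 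For $V_n^{(1)}$ you additionally need a covariance inequality for absolute values of jointly Gaussian variables, e.g.\ $\operatorname{Cov}(|X|,|Y|)\le C\,\sigma_X\sigma_Y\rho^2$, or a Gaussian concentration argument for the Lipschitz functional $x\mapsto\sum_k|x_k|$. These estimates do go through for every $H\in\,]0,1[\,\setminus\{\frac12\}$ (including $H\ge\frac34$, where the correlations are no longer square-summable but the normalization still dominates), so no step actually fails; but as written the technical core of the proof is asserted rather than proved. The cleanest fix is either to carry out these variance computations explicitly or to do what the paper does and simply cite \cite{TB1}, Lemma 2.1, together with Lemma~\ref{L:2}.
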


\begin{proof}
By Lemma $2.1$ of \cite{TB1} this result is a direct consequence
of Lemma~\ref{L:2}.
\end{proof}

The following result is the second important consequence of Lemma~\ref
{L:2}. It deals with the continuity and \xch{nondifferentiablity}{nondifferentabiity} of the gfBm
sample paths.

\begin{lemma}
\label{L:12} Let $H \in(0, 1)$.
\begin{enumerate}
\item The gfBm $Z^H$ admits a version whose sample paths are almost
surely H\"older continuous of order strictly less than $H$.
\item For every $H \in] 0; 1[^N$,
\begin{equation}
\label{eq:27} \lim_{\epsilon\rightarrow0^+} \sup_{t \in[t_0-\epsilon, t_0+
\epsilon]} \bigg|
\frac{ Z^H(t) - Z^H(t_0) }{t-t_0} \bigg| = + \infty,
\end{equation}
with probability one for every $t_0 \in{\mathbb R} $.
\end{enumerate}
\end{lemma}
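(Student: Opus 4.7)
The two parts rest on the sharp two-sided variance estimate of Lemma~\ref{L:2}: the upper bound $E(Z^H_t-Z^H_s)^2 \le \nu(a,b,H)|t-s|^{2H}$ drives part~1 via Kolmogorov--Chentsov, while the matching lower bound $E(Z^H_t-Z^H_s)^2 \ge \gamma(a,b,H)|t-s|^{2H}$, combined with Gaussianity, drives part~2 via a reverse-Fatou argument on one-sided difference quotients.

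For part~1, since $Z^H$ is a centered Gaussian process, each increment satisfies
$E|Z^H_t-Z^H_s|^{2p} = c_p\,(E(Z^H_t-Z^H_s)^2)^p \le c_p\,\nu(a,b,H)^p\,|t-s|^{2Hp}$
for every integer $p \ge 1$, with $c_p = (2p)!/(2^p p!)$. The Kolmogorov--Chentsov criterion then furnishes a version of $Z^H$ whose sample paths are almost surely Hölder continuous of every order strictly less than $H - 1/(2p)$; letting $p \to \infty$ yields Hölder continuity of every order strictly less than $H$.

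For part~2, fix $t_0 \ge 0$ and set $S_\epsilon := \sup_{0 < |t-t_0| \le \epsilon} |(Z^H_t - Z^H_{t_0})/(t-t_0)|$, which is non-increasing in $\epsilon$, so the limit $L := \lim_{\epsilon \downarrow 0} S_\epsilon$ exists in $[0,+\infty]$. Along the sequence $\epsilon_n = 1/n$, the random variables $X_n := (Z^H_{t_0+\epsilon_n} - Z^H_{t_0})/\epsilon_n$ are centered Gaussian with $\operatorname{Var}(X_n) \ge \gamma(a,b,H)\,\epsilon_n^{2H-2}$ by Lemma~\ref{L:2}. A short check (completing the square in the formula for $\gamma$) shows $\gamma(a,b,H) > 0$ whenever $(a,b) \ne (0,0)$ and $H \in (0,1)$; since $H < 1$, this forces $\operatorname{Var}(X_n) \to +\infty$, hence $P(|X_n| > M) \to 1$ for every $M > 0$. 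Reverse Fatou then gives $P(\limsup_n \{|X_n| > M\}) \ge \limsup_n P(|X_n| > M) = 1$, so $\limsup_n |X_n| = +\infty$ almost surely.

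The concluding step transfers this divergence to $L$: because $t_0+\epsilon_m \in [t_0-\epsilon_n,t_0+\epsilon_n] \setminus \{t_0\}$ for every $m \ge n$, the definition of the supremum gives $S_{\epsilon_n} \ge \sup_{m \ge n}|X_m|$; letting $n \to \infty$ and using monotonicity on both sides yields $L \ge \limsup_m |X_m| = +\infty$ almost surely, which is exactly \eqref{eq:27}. The main technical subtlety is this last passage, since the variance divergence of $X_n$ only delivers divergence in probability: the monotonicity of $\epsilon \mapsto S_\epsilon$ is essential to upgrade a Fatou-type inequality for a single sequence of difference quotients into almost sure divergence of the full supremum.
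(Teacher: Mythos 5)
Your argument is correct and takes essentially the same route as the paper: part 1 is precisely the Kolmogorov criterion applied to the moment bounds coming from Lemma~\ref{L:2}, and part 2 is the classical variance-blow-up argument based on the lower bound $\gamma(a,b,H)>0$, which is exactly the strategy the paper outsources to Lemmas 4.1--4.2 of \cite{MZROSE}; you are simply supplying the details the paper omits. One trivial slip: $S_\epsilon$ is non-decreasing in $\epsilon$ (so non-increasing as $\epsilon \downarrow 0$), but this does not affect the existence of the limit or your final transfer step.
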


\begin{proof}
The first statement follows by the Kolmogorov criterion from
Lemma~\ref{L:2}.

The second one is easily obtained by using the specific expression of
the second moment of the increments of the gfBm (\ref{eq:5}) and by
following exactly the same strategies as in the proofs of Lemma $4.1$
and Lemma $4.2$ in \cite{MZROSE}.
\end{proof}

As the third consequence of Lemma~\ref{L:2}, we see that if $b \neq
0$, then the gfBm does not have stationary increments, but this
property is replaced by the inequalities appeared in statement $2$ of
Lemma~\ref{L:2}. In order to understand how far is the gfBm from a
process with stationary increments, we will compare the gfBm increments
and the fBm increments. To reach this goal, let us first recall that,
in the fractional Brownian motion case $(a= 1, b=0)$, we have, for all
$p \in{\mathbb N}$ and $n \ge0$,
\begin{align*}
{\mathbb E} \bigl( \bigl(B_{p+1}^H -B_p^H
\bigr) \bigl(B_{p+n+1}^H-B_{p+n}^H\bigr)
\bigr) &= {\mathbb E} \xch{\bigl(B_{1}^H\bigl(B_{n+1}^H-B_{n}^H\bigr) \bigr)}{(\bigl(B_{1}^H\bigl(B_{n+1}^H-B_{n}^H\bigr) \bigr)}
\\
&=\frac{1}{2} \bigl[ (n+1)^{2H}-2n^{2H} +
(n-1)^{2H} \bigr]
\\
&=R_B(0,n) .
\end{align*}
Denote
\begin{equation}
R_Z(p,p+n)= {\mathbb E} \bigl( \bigl(Z_{p+1}^H
-Z_p^H\bigr) \bigl(Z_{p+n+1}^H
-Z_{p+n}^H\bigr) \bigr), \quad p \ge1 .
\end{equation}
In the following proposition, we compute the term $R_Z(p,p+n)$, showing
how different it is from $R_B(0,n)$.

\begin{prop}
For every $n \ge1$, we have, as $p \rightarrow\infty$,
\[
R_Z(p,p+n) = \bigl(a^2+b^2\bigr) R_B(0,n) -ab
\bigl( 2^{2H-1} H(2H-1) \bigr) p^{2(H-1)} \bigl(1 + o(1) \bigr),
\]
and, consequently,
\begin{equation}
\lim_{p\rightarrow\infty} R_Z(p,p+n) = \bigl(a^2+b^2
\bigr) R_B(0,n) .
\end{equation}
\end{prop}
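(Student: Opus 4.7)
The plan is to decompose the gfBm increment into a linear combination of fBm increments on the positive and negative half-lines, expand the expectation bilinearly, and extract the asymptotic contribution of the cross terms.

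\smallskip

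\textbf{Step 1 (Decomposition).} Using the definition \eqref{eq:3}, write
\[
Z_{p+1}^H - Z_p^H = a \Delta_p + b \Delta'_p, \qquad \Delta_p := B_{p+1}^H - B_p^H, \quad \Delta'_p := B_{-(p+1)}^H - B_{-p}^H.
\]
Expanding the product that defines $R_Z(p,p+n)$ yields four terms:
\[
R_Z(p,p+n) = a^2 \,E[\Delta_p \Delta_{p+n}] + b^2\, E[\Delta'_p \Delta'_{p+n}] + ab\, \bigl( E[\Delta_p \Delta'_{p+n}] + E[\Delta'_p \Delta_{p+n}] \bigr).
\]

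\textbf{Step 2 (Diagonal terms).} By the stationarity of the increments of $B^H$ on $\mathbb{R}_+$, the first term equals $a^2 R_B(0,n)$. For the second term, either observe that $t \mapsto B_{-t}^H$ is itself a fBm with the same Hurst index, or compute directly from \eqref{eq:1}: the four instances of $(p+k)^{2H}$ cancel in pairs, and the remaining contributions collapse to $\frac{1}{2}[(n+1)^{2H} - 2n^{2H} + (n-1)^{2H}] = R_B(0,n)$. So the diagonal contribution is $(a^2+b^2)R_B(0,n)$.

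\smallskip

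\textbf{Step 3 (Cross terms).} Using \eqref{eq:1} with indices of opposite sign, I compute $E[\Delta_p \Delta'_{p+n}]$ as a sum of four covariances. The $(p+k)^{2H}$ and $(p+n+k)^{2H}$ terms again cancel in pairs, leaving only contributions from $|t-s|^{2H}$ with $t>0$, $s<0$. A careful bookkeeping shows
\[
E[\Delta_p \Delta'_{p+n}] \;=\; -\tfrac{1}{2}\bigl[(2p+n+2)^{2H} - 2(2p+n+1)^{2H} + (2p+n)^{2H}\bigr],
\]
and by the same computation (the expression is symmetric in $(\Delta_p, \Delta'_{p+n}) \leftrightarrow (\Delta'_p, \Delta_{p+n})$) the term $E[\Delta'_p \Delta_{p+n}]$ equals the same quantity.

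\smallskip

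\textbf{Step 4 (Asymptotic expansion).} The bracket in Step 3 is a second-order centered difference of $f(x) = x^{2H}$ at $x = 2p+n+1$. By Taylor's theorem,
\[
(2p+n+2)^{2H} - 2(2p+n+1)^{2H} + (2p+n)^{2H} = 2H(2H-1)(2p+n+1)^{2H-2}\bigl(1+o(1)\bigr)
\]
as $p\to\infty$, and $(2p+n+1)^{2H-2} = 2^{2H-2} p^{2H-2}(1+o(1))$. Combining with the factor $2ab$ from the two equal cross terms yields the claimed
\[
-ab\bigl(2^{2H-1} H(2H-1)\bigr) p^{2(H-1)}\bigl(1+o(1)\bigr).
\]
Since $H<1$ gives $p^{2(H-1)} \to 0$, the limit relation follows immediately.

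\smallskip

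The one place requiring care is the bookkeeping of the cross-covariance in Step 3 (tracking which $(p+k)^{2H}$ terms cancel and identifying the survivor as a second difference at the point $2p+n+1$); everything else is a standard Taylor expansion.
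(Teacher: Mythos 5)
Your proof is correct and follows essentially the same route as the paper: the paper's ``easy calculus'' is exactly your bilinear expansion yielding $R_Z(p,p+n)=(a^2+b^2)R_B(0,n)-ab\,[(2p+n+2)^{2H}-2(2p+n+1)^{2H}+(2p+n)^{2H}]$, after which both arguments conclude by a Taylor expansion of this second difference (the paper factors out $(2p)^{2H}$, you expand at $2p+n+1$, which is only a cosmetic difference). Your sign bookkeeping in the cross terms and the final constant $2^{2H-1}H(2H-1)$ both check out.
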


\begin{proof}
An easy calculus allows us to get
\begin{align}
R_Z(p,p+n) & = \frac{a^2+b^2}{2} \bigl[(n+1)^{2H}-2n^{2H}
+ (n-1)^{2H} \bigr] \notag
\\
&\quad - ab \bigl[ (2p+n+2)^{2H} - 2(2p+n+1)^{2H}
+(2p+n)^{2H} \bigr] \notag
\\
& = \bigl(a^2+b^2\bigr) R_B(0,n) -ab
f_p(n),
\end{align}
for every $n \ge1$,
where $f_p(n) = (2p+n+2)^{2H} - 2(2p+n+1)^{2H} +
(2p+n)^{2H} $.

By Taylor's expansion we have, as $p \rightarrow\infty$,
\begin{align*}
f_p(n) &=(2p)^{2H} \biggl[ \biggl( 1+ \frac{n+2}{2p}
\biggr) ^{2H} - 2 \biggl( 1 + \frac{n+1}{2p} \biggr) ^{2H} +
\biggl( 1 + \frac{n}{2p} \biggr) ^{2H} \biggr]
\\
&= \bigl( 2^{2H-1} H(2H-1) \bigr) p^{2(H-1)} \bigl( 1+ o(1) \bigr)
.
\end{align*}
Since $H < 1$, the last term tends to $0$ as $p$ goes to infinity.
\end{proof}

\begin{remark}
If $a \neq0$ and $b \neq0$, then the gfBm increments are not
stationary. The meaning of the proposition is that they converge to a
stationary sequence.
\end{remark}

Now, we are interested in the behavior of the gfBm increments with
respect to $n$ (as $n \rightarrow\infty$) and, in particular, in the
long-range dependence of the process~$Z^H$.

\begin{definition}
We say that the increments of a stochastic process $X$ are long-range
dependent if for every integer $p \ge1$, we have
\[
\sum_{n \ge1} R_X(p, p+n) = \infty,
\]
where $R_X(p,p+n) = {\mathbb E} ( (X_{p+1}
-X_p)(X_{p+n+1} -X_{p+n})  )$.
\end{definition}


\begin{theorem}
For every $(a,b) \in{\mathbb R}^2 \setminus\{ (0,0) \}
$, the increments of $Z^H(a,b)$ are long-range dependent if and only if
$H > \frac{1}{2}$ and $a \neq b$.
\end{theorem}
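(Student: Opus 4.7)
The plan is to use the explicit representation of $R_Z(p,p+n)$ derived in the preceding proposition and analyze its behavior as $n \to \infty$, then sum over $n$. Writing
\begin{align*}
R_Z(p,p+n) &= \frac{a^2+b^2}{2} \bigl[ (n+1)^{2H} - 2 n^{2H} + (n-1)^{2H} \bigr] \\
&\quad - ab \bigl[ (2p+n+2)^{2H} - 2(2p+n+1)^{2H} + (2p+n)^{2H} \bigr],
\end{align*}
I recognize both bracketed expressions as discrete second differences of $x \mapsto x^{2H}$. By Taylor's expansion at large argument, for any fixed $c \ge 0$,
\[
(k+c+1)^{2H} - 2(k+c)^{2H} + (k+c-1)^{2H} = 2H(2H-1)(k+c)^{2H-2} \bigl( 1 + O(k^{-2}) \bigr) \quad (k \to \infty),
\]
so applying this with $c = 0$ and $c = 2p+1$ respectively yields
\[
R_Z(p,p+n) = H(2H-1)\bigl[(a^2+b^2) - 2ab\bigr] \, n^{2H-2} (1+o(1)) = H(2H-1)(a-b)^2 \, n^{2H-2}(1+o(1)).
\]

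From this the theorem follows by a case analysis. If $H = 1/2$, the function $x \mapsto x^{2H}$ is affine, both second differences vanish, and $R_Z(p,p+n) = 0$ for $n \ge 1$, so the series converges trivially. If $H > 1/2$ and $a \ne b$, the leading constant $H(2H-1)(a-b)^2$ is strictly positive while $\sum_n n^{2H-2}$ diverges, giving $\sum_n R_Z(p,p+n) = +\infty$, i.e., long-range dependence. If $H < 1/2$ and $a \ne b$, $\sum_n n^{2H-2}$ converges (since $2H - 2 < -1$), so $\sum_n R_Z(p,p+n)$ converges absolutely and long-range dependence fails.

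The delicate case, which I expect to be the main obstacle, is $a = b$: the leading coefficient $(a-b)^2$ vanishes, and the theorem asserts absence of long-range dependence even when $H > 1/2$, so the leading-order asymptotics are insufficient. To handle it I would push the Taylor expansion one order further: since both second differences share the common main term $2H(2H-1) k^{2H-2}$, the next-order contribution to $R_Z(p,p+n)$ comes from
\[
n^{2H-2} - (n+2p+1)^{2H-2} = (2-2H)(2p+1) \, n^{2H-3} + O\bigl(n^{2H-4}\bigr),
\]
which gives $R_Z(p,p+n) = O(n^{2H-3})$. Since $H \in (0,1)$, the exponent $2H-3$ is strictly less than $-1$, so the series converges and long-range dependence fails. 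This refined higher-order expansion is the only nontrivial technical step; all other cases follow immediately from the leading-order asymptotics.
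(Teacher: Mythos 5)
Your proposal is correct and follows essentially the same route as the paper: expand $R_Z(p,p+n)$ by Taylor's formula as $n\to\infty$, obtain the leading term $H(2H-1)(a-b)^2 n^{2H-2}$, and in the degenerate case $a=b$ pass to the next order $n^{2H-3}$, which is summable since $2H-3<-1$. The only differences are cosmetic: you bound the second-order term by $O(n^{2H-3})$ where the paper computes its constant $4H(2H-1)(1-H)a^2(2p+1)$ explicitly, and you treat $H=\tfrac12$ (where $R_Z(p,p+n)=0$) separately, which the paper leaves implicit.
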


\begin{proof}
For every integer $p \ge1$, by Taylor's expansion, as $n
\rightarrow\infty$, we have
\begin{align*}
R_Z(p,p+n) &= \frac{a^2+b^2}{2} n^{2H} \biggl[ \biggl( 1+ \frac{1}{n} \biggr)^{2H} -2 + \biggl( 1 - \frac{1}{n}\biggr) ^{2H} \biggr]\\
&\quad -ab n^{2H} \biggl[ \biggl( 1+ \frac{2p+2}{n}\biggr)^{2H}\!\! - 2 \biggl( 1+ \frac{2p+1}{n} \biggr)^{2H}\!\! +\biggl( 1+ \frac{2p}{n} \biggr)^{2H} \biggr]\\
&= H(2H-1)n^{2H-2} (a-b)^2\\
&\quad -4H(2H-1) (H-1) ab (2p+1)n^{2H-3} \bigl( 1 + o(1) \bigr) .
\end{align*}

If $a \neq b$, we see that as $n \rightarrow\infty$, $R_Z(p,p+n) \approx H(2H-1)n^{2H-2} (a-b)^2 $. Then
\[
\sum_{n \ge1} R_Z(p,p+n) = \infty
\quad\Longleftrightarrow\quad 2H-2 > -1 \quad\Longleftrightarrow\quad H > \frac{1}{2}.
\]

If $a=b$, then, as $n \rightarrow\infty$, $R_Z(p,p+n)
\approx 4H(2H-1)(1-H) a^2 (2p+1) n^{2H-3} $. For every $H \in]0, 1[$,
we have $2H-3 < -1$ and, consequently,
\[
\sum_{n \ge1} R_Z(p,p+n) < \infty.\qedhere
\]
\end{proof}

\begin{remark} We know that the subfractional Brownian motion
increments $Z^H( \frac{1}{\sqrt{2}}, \frac{1}{\sqrt{2}}) $
are short-range dependent for every $H \in]0,1[$. From the theorem we
see another important motivation of the investigation of the general
process $Z^H(a,b)$ with $a$ and $b$ not necessary equal; it allows us
to exhibit models taking into account not only short-range dependence
of the increments, but also phenomena of long-range dependence if it exists.
\end{remark}

Lemma~\ref{L:2} has also an immediate application to local time of the gfBm.

\begin{corollary}
For $0 < H < 1$, on each \textup{(}time\textup{)}-interval $[0,T]
\subset[0, + \infty[$, the process $(Z_t^H)_{0 \le t
\le T}$ admits a local time $L^H([0,T],x)$, which satisfies
\[
\int_{\Bbb R} L^H\bigl([0,T],x\bigr)^2\,dx < \infty.
\]
\end{corollary}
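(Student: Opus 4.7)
The plan is to appeal to the classical sufficient condition (due to Berman) for the existence of a square-integrable local time of a centered Gaussian process: if $X = (X_t)_{0 \le t \le T}$ is centered Gaussian, then $X$ admits a local time $L([0,T], \cdot)$ on $\mathbb R$ with $\int_{\mathbb R} L([0,T],x)^2\, dx < \infty$ almost surely as soon as
\[
\int_0^T \int_0^T \frac{ds\, dt}{\sqrt{E(X_t - X_s)^2}} < \infty.
\]
Since $Z^H$ is centered Gaussian by Lemma \ref{L1}, I would just need to verify this integrability for $X = Z^H$.

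For that, I would appeal directly to statement 2 of Lemma \ref{L:2}, which already furnishes a lower bound of the form $E(Z_t^H - Z_s^H)^2 \ge \gamma(a,b,H)\,|t-s|^{2H}$. Combining it with the elementary fact that $\int_0^T \int_0^T |t-s|^{-H}\, ds\, dt < \infty$ for every $H \in (0,1)$, this immediately yields
\[
\int_0^T \int_0^T \frac{ds\, dt}{\sqrt{E(Z_t^H - Z_s^H)^2}} \le \frac{1}{\sqrt{\gamma(a,b,H)}} \int_0^T \int_0^T \frac{ds\, dt}{|t-s|^H} < \infty,
\]
provided that $\gamma(a,b,H) > 0$.

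The only nontrivial step is therefore to check the strict positivity of $\gamma(a,b,H)$. On the region $\mathcal{D}$ this is trivial, since $\gamma = a^2 + b^2$ and $(a,b) \neq (0,0)$. On the region $\mathcal{C}$, I would view $\gamma(a,b,H) = a^2 - 2(2^{2H-1}-1)ab + b^2$ as a quadratic in $a$ with $b$ fixed (and $b \neq 0$; the case $b = 0$ is immediate). Its discriminant $4b^2\bigl((2^{2H-1}-1)^2 - 1\bigr)$ is strictly negative because $|2^{2H-1}-1| < 1$ for every $H \in (0,1)$, so $\gamma > 0$ throughout the parameter range. This positivity check is essentially the only obstacle; once it is out of the way, Berman's criterion closes the argument.
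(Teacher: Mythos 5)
Your proposal is correct and follows essentially the same route as the paper: Berman's criterion for a square-integrable local time of a centered Gaussian process, verified via the lower bound $E(Z_t^H-Z_s^H)^2 \ge \gamma(a,b,H)\,|t-s|^{2H}$ from statement 2 of Lemma~\ref{L:2} together with the integrability of $|t-s|^{-H}$ on $[0,T]^2$. Your explicit check that $\gamma(a,b,H)>0$ (via the negative discriminant, since $|2^{2H-1}-1|<1$) simply makes precise the positive constant $c$ that the paper's proof leaves implicit, so the two arguments coincide in substance.
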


\begin{proof}
First, denote, for $s,t \in{\Bbb R}_+$ and $s \neq t$, by
$\varphi_{Z_t^H-Z_s^H} $ the characteristic function of the random
variable $Z_t^H - Z_s^H$ and by $p^H(x;t,s)$ its probability density
function. They are expressed by
\begin{equation}
\label{eq:4} \varphi_{Z_t^H-Z_s^H} (u) = {\Bbb E} \bigl[ \exp \bigl( iu
\bigl(Z_t^H-Z_s^H\bigr) \bigr)
\bigr] = \exp \biggl( - \frac{u^2}{2} {\Bbb E} \bigl( Z_t^H
- Z_s^H \bigr) ^2 \biggr),
\end{equation}
and
\begin{equation}
\label{eq:5} p^H(x;s,t) = 1/ \sqrt{ 2 \pi{\Bbb E} \bigl(
Z_t^H - Z_s^H \bigr)
^2} \exp \bigl( - x^2/ 2 {\Bbb E} \bigl(
Z_t^H - Z_s^H \bigr)
^2 \bigr) .
\end{equation}
It is clear that
\[
\int_{- \infty}^{\infty} \big| \varphi_{Z_t^H-Z_s^H} (u) \big| \,du <
\infty,
\]
and by Lemma~\ref{L:2} and the fact that $0 < H < 1$, for every $T > 0$,
\[
\int_0^T \int_0^T
p^H(0;s,t) \,ds \,dt \le c \int_0^T
\int_0^T | t-s |^{-H} \,ds\, dt <
\infty,
\]
where $c$ is positive constant.

So, by Lemma $3.2$ of \cite{Berman} the local time of $(Z_t^H)_{0 \le t \le T}$ exists almost surely, and it is square
integrable as a function of $u$.
\end{proof}

\section*{Acknowledgment}
The author thanks the anonymous reviewers for their careful reading
this manuscript and their insightful comments and suggestions.


\end{document}